\newtheorem{theorem}{Theorem}[section]
\newtheorem{definition}[theorem]{Definition} 
\newtheorem{remark}[theorem]{Remark}
\newtheorem{example}[theorem]{Example}
	\algnewcommand\And{\textbf{and }}
	\algnewcommand\Or{\textbf{or }}
\newcommand{\xnot}{\x_0}
\newcommand{\wnot}{\w_0}
\newcommand{\znot}{\mathbf{z}_0}
\newcommand{\btheta}{\pmb{\theta}}
\newcommand{\coord}{\mathbf{e}} 
\newcommand{\capw}{\mathbf{W}}
\newcommand{\caps}{\mathbf{S}}
\newcommand{\real}{\mathbb R}
\newcommand{\posint}{\mathbb{N}}
\newcommand{\zz}{\mathbf{z}}
\newcommand{\w}{\mathbf{w}}
\newcommand{\uu}{\mathbf{u}}
\newcommand{\vv}{\mathbf{v}}
\newcommand{\f}{\mathbf{f}}
\newcommand{\F}{\mathbf{F}}
\newcommand{\g}{\mathbf{g}}
\newcommand{\dd}{\mathbf{d}}
\newcommand{\h}{\mathbf{h}}
\newcommand{\x}{\mathbf{x}}
\newcommand{\y}{\mathbf{y}}
\newcommand{\M}{\mathbf{M}}
\newcommand{\I}{\mathbf{I}}
\newcommand{\II}{\mathbf{I}}
\newcommand{\GG}{\mathbf{G}}
\newcommand{\m}{\mathbf{m}}
\newcommand{\rr}{\mathbf{r}}
\newcommand{\capx}{\mathbf{X}}
\newcommand{\barcapD}{\Bar{\mathbf{x}}}
\newcommand{\barcapA}{\Bar{\mathbf{w}}}
\newcommand{\capu}{\mathbf{U}}
\newcommand{\capv}{\mathbf{V}}
\newcommand{\capy}{\mathbf{Y}}
\newcommand{\zero}{\mathbf{0}}
\newcommand{\Jf}{\mathbf{Jf}}
\newcommand{\J}{\mathbf{J}}
\algnewcommand{\algorithmicgoto}{\textbf{go to}}%
\algnewcommand{\Goto}[1]{\algorithmicgoto~\ref{#1}}%
\newenvironment{bsmallmatrix}{\left[\begin{smallmatrix}}{\end{smallmatrix}\right]}
\newcommand{\hesham}[1]{\textcolor{black}{#1}} 
\newcommand{\ps}[1]{\textcolor{black}{#1}} 
\begin{document}

\def\BibTeX{{\rm B\kern-.05em{\sc i\kern-.025em b}\kern-.08em
    T\kern-.1667em\lower.7ex\hbox{E}\kern-.125emX}}
\markboth{\journalname, VOL. XX, NO. XX, XXXX 2017}
{Author \MakeLowercase{\textit{et al.}}: Preparation of Papers for IEEE Control Systems Letters (August 2022)}

\title{\LARGE \bf
Observability and State Estimation for Smooth and Nonsmooth Differential Algebraic Equation Systems
}

\author{Hesham Abdelfattah$^{1}$, Sameh A. Eisa$^{2}$, and Peter Stechlinski$^{3}$% <-this % stops a space
\thanks{$^{1}$ Hesham Abdelfattah is with the Department of Aerospace
Engineering and Engineering Mechanics and the Department of
Mathematical Sciences, University of Cincinnati, Cincinnati, OH, USA
        {\tt\small abdelfhm@mail.uc.edu}}%
\thanks{$^{2}$ Sameh A. Eisa is with the Department of Aerospace Engineering and Engineering Mechanics, 
        University of Cincinnati, OH, USA
        {\tt\small eisash@ucmail.uc.edu}}%
\thanks{$^{3}$ Peter Stechlinski is with the Department of Mathematics and Statistics, University of Maine, ME, USA
        {\tt\small peter.stechlinski@maine.edu}}%
\thanks{This material is based upon work supported by the National Science Foundation under Award No. 2318772 and 2318773.}
}

\maketitle
\thispagestyle{empty}

%%%%%%%%%%%%%%%%%%%%%%%%%%%%%%%%%%%%%%%%%%%%%%%%%%%%%%%%%%%%%%%%%%%%%%%%%%%%%%%%
\begin{abstract}
In this work, 
we extend the sensitivity-based rank condition (SERC) test for local observability to another class of systems, namely smooth and nonsmooth differential-algebraic equation (DAE) systems of index-1. The newly introduced test for DAEs, which we call the lexicographic SERC (L-SERC) observability test, utilizes the theory of lexicographic differentiation to compute sensitivity information. 
Moreover, the newly introduced L-SERC observability test 
%is useful in the context of partial observability as it 
can judge which states are observable and which are not. 
Additionally,  we introduce a novel sensitivity-based extended Kalman filter (S-EKF) algorithm for state estimation, applicable to both smooth and nonsmooth DAE systems.  Finally, 
 we apply the newly developed S-EKF to estimate the states of a wind turbine power system model.
 %, with two different, smooth and nonsmooth, output functions.
\end{abstract}

\begin{IEEEkeywords}
DAE systems, observability, state estimation, sensitivity rank condition, extended Kalman filtering.
\end{IEEEkeywords}

%%%%%%%%%%%%%%%%%%%%%%%%%%%%%%%%%%%%%%%%%%%%%%%%%%%%
\section{Introduction}
%%%%%%%%%%%%%%%%%%%%%%%%%%%%%%%%%%%%%%%%%%%%%%%%%%%%
\IEEEPARstart{O}{bservability} analysis aims to determine if we are able to identify the states of the system via access to the measurements of some output function. In mathematical terms, observability of an input-output dynamical system means that we can uniquely identify the initial conditions of the states via measurements of the output function. 
In the literature, a common method to study local observability of ODE systems is the observability
rank condition test, which is based on successive Lie
derivative computations involving the output function \cite{hermann1977nonlinear}. There are other methods in the literature that study local observability, such as the sensitivity rank condition test (SERC) \cite{stigter2018efficient}.   Recently, the authors in \cite{van2022sensitivity} showed that the SERC test for observability (and its closely related property, identifiability) is a more computationally efficient method when compared to Lie derivative-based tests, especially for large-scale systems. For ODE systems, establishing (or assuming) system observability has led to a matured literature of designing observers for state estimation, such as but not limited to extended Kalman filtering (EKF) \cite[Chapter 8]{beard2012small} \hesham{with convergence analysis of EKF discussed in \cite{ljung1979asymptotic}}; recently, geometric-based EKF was also introduced for control-affine ODEs \cite{pokhrel2023gradient}. It is important to highlight that all the aforementioned results are for smooth ODE input-output systems. Recently, the authors of this paper introduced a novel nonsmooth analog of the SERC test, the lexicographic SERC (L-SERC) test, which enabled the observability test for nonsmooth ODE systems \cite{stechlinski2025identifiability}. For piecewise ODE systems (i.e., ones which transitions between different modes), a discontinuous EKF (DEKF) algorithm was introduced in \cite{chatzis2017discontinuous} and its convergence was studied --- their approach operates on subintervals within each mode but does not theoretically treat the nonsmoothness  in a direct way.

In this paper, we are concerned with another important class of input-output systems, namely, the smooth and nonsmooth DAEs of index-1.
Such systems are widely present in different engineering applications and physical models. For example, different applications where smooth DAEs are utilized are presented in
\cite{brenan1995numerical,kunkel2006differential}, and we see that nonsmooth DAE systems \cite{2015_Stechlinski_Barton_continuousdependence,2016_Stechlinski_Barton_LD_DAEs, Stechlinski_Patrascu_Barton_DAEF} are used in modeling process systems \cite{2017_NonsmoothDAE_CACE}, multibody mechanical systems  \cite{pang2007strongly}, and power systems \cite{eisa2021sensitivity,abdelfattah2025new}.  
In the literature, multiple works have been aimed at studying observability of smooth DAEs, such as the study of observability of linear DAEs (see \hesham{\cite{campbell1991duality,campbell1991observability,hou2002causal}}), and of nonlinear DAEs (see \cite{terrell1997observability,montanari2024identifiability}). There have also been studies for designing observers for smooth DAEs, such as the
% proposed $\mathcal {H}_{\infty }$ observer in \cite{nugroho2022observers}, and 
the application of  EKF methods to DAEs in \cite{becerra2001applying} and \cite{mandela2010recursive}. Nevertheless, to the best of our knowledge, sensitivity-based methods, such as SERC, have not been introduced to study observability of DAE systems (smooth or nonsmooth).    
 
% In this paper, we present a sensitivity-based method for characterizing the observability of an important class of systems, namely, the smooth and nonsmooth DAEs of index-1.
% Such systems are widely present in different engineering applications and physical models. For example, different applications where smooth DAEs are utilized are presented in
% \cite{brenan1995numerical}, and we see that nonsmooth DAE systems are used in modeling process systems \cite{2017_NonsmoothDAE_CACE}, multibody mechanical systems  \cite{pang2007strongly}, and power systems \cite{eisa2021sensitivity}. Multiple works considered the observability of DAEs such as the study of observability of linear DAEs in \cite{campbell1991duality,hou2002causal}, and of nonlinear DAEs in \cite{terrell1997observability}. There have also been studies for designing observers for DAEs, such as the proposed $\mathcal {H}_{\infty }$ observer in \cite{nugroho2022observers}, and the application of the EKF to DAEs in \cite{becerra2001applying} and \cite{mandela2010recursive}.

\textbf{Contributions}:
    In this paper, we develop a sensitivity-based test (the L-SERC test) for assessing local observability for both smooth and nonsmooth DAE input-output systems of index-1. 
    % the L-SERC test for the assessment of the local observability.
    Moreover, the newly
introduced L-SERC test algorithm is useful as it judge which states are observable and which are not. Additionally,  
we introduce a novel sensitivity-based extended Kalman filter (S-EKF) algorithm for state estimation, applicable to both smooth and nonsmooth DAE systems.  Finally, 
 we apply the newly developed S-EKF to estimate the states of a wind turbine power system model.
 %, with two different, smooth and nonsmooth, output functions.
    % Moreover, the L-SERC test algorithm is built such that it can also identify which states are causing the system to be non-observable. 
    % In addition, we also provide a novel sensitivity-based EKF for estimating the observable states for both smooth and nonsmooth DAEs. Finally, we illustrate the proposed L-SERC test and the sensitivity-based EKF via a wind turbine power system with two different cases for the output function.

\section{Preliminaries}\label{sec:prelim}
Throughout this paper, we will use the lexicographic directional derivative (LD-derivative) \cite{Khan2015_automaticdiff} as a tool for obtaining derivative information of a given 
lexicographically smooth (L-smooth) \cite{Nesterov2005} function, 
\hesham{where a locally Lipschitz function  $\f:\real^n \rightarrow \real^m$ is said to be L-smooth at $\xnot$ \cite{Nesterov2005} if, for any $k \in \posint$ and directions matrix $\M=[\m_1  \quad \cdots \quad \m_k] \in \real^{n \times k}$, the homogenization sequence 
\begin{equation}\label{eq:Lsmoothfunctions}
\begin{aligned}
\f^{(0)} &: \real^n \to \real^m: \dd \mapsto \f'(\xnot;\dd),\\
%\f^{(1)}=\f^{(1)}_{\xnot,\bm{\mathrm{M}}} &: \real^n \to \real^m: \dd \mapsto [\f^{(0)}_{\xnot,\bm{\mathrm{M}}}]'(\m_1;\dd),\\
%\vdots\\
\f^{(j)}&: \real^n \to \real^m: \dd \mapsto [\f^{(j-1)}_{\xnot,\bm{\mathrm{M}}}]'(\m_j;\dd), \; j=1,\ldots,k,
\end{aligned}
\end{equation}
exists, where $\f'(\xnot;\dd)=\lim_{\alpha \to 0^+} [\f(\xnot+\alpha \dd)-\f(x)]/\alpha$.  \ps{(See \cite{ref:OMS_generalizedderivatives} for an example calculating the homogenization sequence.)}
%\hesham{(The reader may refer to the ``Tutorial" document in \cite{githubdae_obs} for an example on calculating the homogenization sequence for an L-smooth function)}. 
All piecewise  differentiable ($PC^1$) \cite{Scholtes} (e.g., min, max, abs-value) and  convex  (e.g., p-norms) functions, and their compositions, are L-smooth.
The LD-derivatives are defined as follows using the homogenization sequence \cite{Khan2015_automaticdiff}:}\small
\color{red}\begin{equation}\label{eq.LDD}
\f'(\xnot;\M):=[\f^{(0)}(\m_1)  \quad \cdots \quad \f^{(k-1)}(\m_k)]\in\real^{m \times k},
\end{equation}
\normalsize\color{black} and are computationally relevant objects as they satisfy sharp calculus rules, hence providing a practical, theoretically rigorous, numerical toolkit \cite{ref:OMS_generalizedderivatives}. In addition, the LD-derivatives theory provides closed-form formulas for widely used nonsmooth functions 
such as the absolute-value function and the max/min functions. 
% Throughout this paper, we will use the lexicographic directional derivative (LD-derivative) \cite{Khan2015_automaticdiff} as a tool for obtaining derivative information of a given lexicographically smooth (L-smooth) \cite{Nesterov2005} function $\f:\real^n\to\real^m$ at $\xnot$.  L-smooth functions are locally Lipschitz continuous functions, and they include the class piecewise continuously differentiable ($PC^1$) \cite{Scholtes} (e.g., min, max, abs-value) and the class of convex functions (e.g., p-norms).
% The LD-derivative is is denoted by  $\f'(\xnot;\M)$, where  $\M=[\m_1 \quad \cdots \quad \m_k]\in\real^{n \times k}$ is some directions matrix. The LD-derivatives are computationally relevant objects as they satisfy sharp calculus rules, hence providing a practical, theoretically rigorous, numerical toolkit \cite{ref:OMS_generalizedderivatives}. In addition, the LD-derivatives theory provides closed-form formulas for widely used nonsmooth functions 
% such as the absolute-value function and the max/min functions. 
% In particular, given a direction matrix 
%  $\M=\begin{bmatrix}m_1&m_2&\dots &m_k\end{bmatrix} \in \mathbb{R}^{1\times k}$, the LD-derivative of the absolute-value function is:
% \begin{align}
%     \mathrm{abs}'(x_0;\M)=\mathrm{fsign}(x_0,m_1,m_2,\dots,m_k)\M,
% \end{align}
% where $\mathrm{fsign}$ returns the sign of the first nonzero element, or zero otherwise. 
For example, given \ps{$\M\in\real^{2 \times k}$,}
%a directions matrix $\M=(\M_{1,\bullet},\M_{2,\bullet})=\begin{bsmallmatrix} \M_{1,\bullet} \\ \M_{2,\bullet}\end{bsmallmatrix}\in\real^{2\times k}$, 
\ps{\small{\begin{align}
    \min{'}(x_0,y_0;\bm{\mathrm{M}})
                &= {\rm \bf slmin}([x_0 \quad \text{row}_1(\M)],[y_0 \quad \text{row}_2(\M)])  \label{eq.LDderivative.max}
                % &:=  \begin{cases}
                %           \M_{1,\bullet},& \text{if } \mathrm{fsign}\left(v_0,\capv\right) \leq 0
                %           ,\\
                %           \M_{2,\bullet},& \text{otherwise}\label{eq.LDderivative.max},
                %       \end{cases}
\end{align}}}\normalsize
\ps{where {\small ${\rm \bf slmin}=\text{row}_1(\M)$ }\normalsize if {\small $\mathrm{fsign}(x_0-y_0,m_{11}-m_{21},\ldots,m_{1k}-m_{2k}) \leq 0$ }\normalsize and {\small ${\rm \bf slmin}=\text{row}_1(\M)$ }\normalsize otherwise, with $\mathrm{fsign}$  returning the sign of the first nonzero element (or zero if the input is zero). }\normalsize
%$v_0=x_0-y_0$, $\capv=\M_{1,\bullet}-\M_{2,\bullet}$, and $\mathrm{fsign}$    returns the sign of the first nonzero element, or zero otherwise. 
\ps{(See \cite{ref:OMS_generalizedderivatives} for other closed-form expressions, and the LD-derivative calculus rules.)}
%\hesham{The reader may refer to the ``Tutorial" document in \cite{githubdae_obs} for step-by-step examples, including also other L-smooth functions.}

%That is, the left-shifted ``lexicographic maximum'' of the two arguments, s
The lexicographic derivative (L-derivative) \cite{Nesterov2005}, $\J_{\rm L}\f(\xnot;\M)\ps{\in\real^{m \times n}}$,  
is a computationally relevant Jacobian-like object, i.e., it can be used in nonsmooth numerical methods \cite{bagirov2020numerical_BGK+20} as it can substitute an  element  of Clarke's generalized derivative  \cite{Clarkebook}:  
%\end{align*}
\small\begin{align*}
\partial \f(\xnot)
:=\text{conv}
\{\F : & \; \exists \x_{j} \to \xnot,  \text{ s.t. }  \x_{j} \in S_{\f},   \J \f(\x_{j})  \to \F\},
\end{align*}
\normalsize
 where $S_{\f}$ is the full measure subset of $\text{dom}(\f)$ on which $\f$ is differentiable. With L-derivatives being the desirable object to compute, and LD-derivatives enjoying a strong   toolkit for calculation, the picture is completed by noting that an L-derivative can be obtained from an LD-derivative when the matrix $\M$ has full row rank (and is hence right invertible):
%using the following relation:
\small\begin{equation}\label{eq:LDderivative:vs:Lderivative}
\ps{\f'(\xnot;\M)=\J_{\rm L}\f(\xnot;\M)\M}.
\end{equation} 
\normalsize
Note that if the function is at least $C^1$, then it is true that  $\f'(\x_0;\M)=\Jf(\x_0) \M$ (hence, the L-derivative equals the Jacobian in this case). 
Lastly, for later use in discussing nonsmooth DAEs, we recall that the Clarke generalized derivative projection \cite{Clarkebook} of $\g$ with respect to $\w$ at $(\xnot,\wnot)$ is  
\small\begin{align*}
\pi_{\w}\partial \g(\xnot,\wnot):= \left\{\GG_{\w} : \exists  [\GG_{\x} \quad \GG_{\w}]\in\partial \g(\xnot,\wnot)\right\}.
\end{align*}
\normalsize
That is, the class of L-smooth functions are locally Lipschitz and directionally differentiable to arbitrary order, which includes all $C^1$, $PC^1$ \cite{Scholtes}, and convex functions, and compositions thereof.  

\section{Sensitivity-Based Observability Method}\label{sec:LSERC test}
%%%%%%%%%%%%%%%%%%%%%%%%%%%%%%%%%%%%%%%%%%%%%%%%%%%%
\subsection{L-SERC Test for Nonsmooth DAE Observability}\label{subsection:LSERC_test}
In this subsection, we extend the L-SERC observability test from nonsmooth ODEs \cite{stechlinski2025identifiability} to nonsmooth DAEs (and thus SERC observability to smooth DAEs).
Consider an input-output DAE system 
in semi-explicit form: 
\small\begin{subequations}\label{eq:1}
\begin{align}
 \dot{\x}(t) &= \f(\x(t),\w(t),\uu(t)), \quad  \x(t_0) = \x_0,\label{eq:1a}\\
 \zero&=\g(\x(t),\w(t),\vv(t)), \label{eq:1b}\\
 \y(t) &= \h(\x(t),\w(t),\uu(t),\vv(t)),\label{eq:1c} 
 \end{align}
  \end{subequations}
\normalsize with $n_x$ differential states $\x$ and $n_w$ algebraic states $\w$. Here there are $n_y$ outputs of the system $\y$, and we assume (throughout the article) that the admissible control inputs satisfy $\uu \in L^1([t_0,t_f],D_u)$ (i.e., Lebesgue-integrable) $\vv  \in LS^0([t_0,t_f],D_v)$ (i.e., L-smooth). We also assume that the  right-hand side (RHS)  functions satisfy $\f \in LS^0(D_x \times D_w \times D_u,\real^{n_x})$ and $\g \in LS^0(D_x \times D_w \times D_v,\real^{n_w})$, and the output  function satisfies $\h \in LS^0(D_x\times D_w \times D_u \times D_v,\real^{n_y})$, with open and connected sets $D_x \subseteq \real^{n_x}$, $D_u \subseteq \real^{n_u}$, $D_v \subseteq \real^{n_v}$, $\Theta \subseteq \real^{n_p}$, and $\x_0 \in D_x$ is the initial state. In what follows, we let $n_z=n_x+n_w$, $D_z=D_x\times D_w$, $\zz=(\x,\w)$ and $\zz_0=(\xnot,\wnot)$. Before proceeding, we formalize  (local) partial observability (in the spirit of partial identifiability \cite{stechlinski2025identifiability}) and regularity (which implies generalized differentiation index one  (see \cite{2015_Stechlinski_Barton_continuousdependence,Stechlinski_Patrascu_Barton_DAEF})  of a nonsmooth DAE, given reference initial conditions $\zz_0^*=(\x_0^*,\w_0^*)$ and reference  control inputs $\uu^*,\vv^*$.  

\begin{definition}\label{defn:identifiability.partial} 
The system in \eqref{eq:1} is  \textbf{locally partially  observable} (or simply \textbf{partially  observable}) at $\zz^*_0$ if  there exist a neighborhood $N \subseteq D_z$ of $\znot^*$ and a connected set $V \subseteq D_z$ containing $\zz^*_0$ such that  for any $\tilde{\zz}_0, \zz^{\dagger}_0\in N \cap V$, we have  that, for all $t\in[t_0,t_f]$,  $\y(t;\uu^*,\vv^*,\tilde{\zz}_0)=\y(t;\uu^*,\vv^*,\zz^{\dagger}_0)$ if and only if $\tilde{\zz}_0=\zz^{\dagger}_0$.
\end{definition}

\begin{definition}\label{defn:regular} A solution 
$\zz^*(t):=\zz(t;\uu^*,\vv^*)=(\x^*(t),\w^*(t))$ of \eqref{eq:1} on $[t_0,t_f]$ through $\{(\x_0^*,\w^*_0,\uu^*,\vv^*)\}$ is \textbf{Clarke-regular} (or simply \textbf{regular}) if every matrix in 
$
\pi_{\w}\partial \g(\x^*(t),\w^*(t),\hesham{\vv^*(t)})$
is nonsingular for all $t\in[t_0,t_f]$. 
\end{definition}
 
\hesham{Note that in the case that $\g$ is $C^1$, then it holds that $
\pi_{\w}\partial \g(\x^*(t),\w^*(t),\vv^*(t))=\frac{\partial \g}{\partial \w}(\x^*(t),\w^*(t),\vv^*(t))$ and so Definition \ref{defn:regular} recovers classical differentiation index one.}
%Similar to our previous work in \cite{abdelfattah2024parameter}, 
We establish an observability L-SERC test for partial observability, based on the lexicographic sensitivity (L-sensitivity) functions of the output. 

% Hence, if \eqref{eq:1} is L-SERC observable at $\znot^*$ in the direction $\dd$, then it is locally partially  observable with $V=\{\alpha \dd:\alpha \geq 0\}$ and some neighborhood $N$ of $\znot^*$.
% a subset of the   ``true'' region in parameter space in which identifiability holds.
\begin{theorem}\label{thm:nonsmoothSERC.observability}
% Suppose that the RHS functions $\f$, $\f_0$, $\g$ and $\h$ in \eqref{eq:1} are L-smooth (i.e. $LS^0$) on their respective domains. Suppose that the set of admissible controls in the differential equations is Lebesgue integrable, and suppose that the set of admissible controls in the algebraic equations is L-smooth.
Suppose that $\zz^*=(\x^*,\w^*)$ is a regular solution of \eqref{eq:1} on $[t_0,t_f]$ through $\{(\x_0^*,\w^*_0,\uu^*,\vv^*)\}$ and, for some $\{t_0,t_1,\ldots,t_N\} \subset [t_0,t_f]$ and $\dd\in\real^{n_x}$, it holds that ${\rm rank}({\pmb \Upsilon}_{\dd})=n_x$, where
\small{\begin{equation}\label{LSERC_ID}
{\pmb \Upsilon}_{\dd}:= 
(\caps_{\y}^{\rm L}(t_0),
\caps_{\y}^{\rm L}(t_1),
\dots,
\caps_{\y}^{\rm L}(t_N))
\end{equation}}
\normalsize
and the L-sensitivity output functions are
 \small{\begin{equation}\label{eq.Lsensitivities}
\caps_{\y}^{\rm L}(t)=\h'(\x^*,\w^*,\uu^*,\vv^*;(\capx^*,\capw^*,\zero,\zero)) \begin{bsmallmatrix}
    \zero_{1\times n_x}\\ \I_{n_x}
\end{bsmallmatrix},
\end{equation}}
\normalsize
and $(\capx^*,\capw^*)$ uniquely solve the following  on $[t_0,t_f]$: 
\small{\begin{subequations}\label{eq:sens.nonsmooth}
\begin{align}
&\dot{\capx}
=\f'(\x^*,\w^*,\uu^*;(\capx,\capw,\zero), \quad \capx(t_0)
=[\dd \quad \II_{n_x}],\label{eq:sens.nonsmootha}\\
&\zero
=\g'(\x^*,\w^*,\vv^*;(\capx,\capw,\zero)).\label{eq:sens.nonsmoothb}
\end{align}
\end{subequations}}
\normalsize
Then \eqref{eq:1} is partially observable. %(with $V=\{\alpha \dd: \alpha \geq 0\}$).
%$\text{rank}({\pmb \Upsilon}_{\dd})=n_x$ for some $\dd \in \real^{n_x}$, then \eqref{eq:1} is partially observable at $\znot^*$, where
%Under assumption \ref{assumption}, 
%if the system in \eqref{eq:1} is L-SERC observable at $\znot^*$ in the direction $\dd \in \real^{n_x}$, then it is locally partially observable at $\znot^*$. 
\end{theorem}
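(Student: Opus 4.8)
The plan is to recast partial observability as local injectivity of a finite-time output map and then extract that injectivity from the rank hypothesis via a lexicographic inverse-function argument. First I would exploit the index-one structure to eliminate the algebraic states: since the solution is regular, \emph{every} matrix in $\pi_{\w}\partial\g(\x^*(t),\w^*(t),\vv^*(t))$ is nonsingular, so Clarke's nonsmooth implicit-function theorem \cite{Clarkebook} (or its $PC^1$ refinement \cite{Scholtes}) applied to the consistency relation $\g(\x_0,\w_0,\vv^*(t_0))=\zero$ represents $\w_0$ locally as an L-smooth function $\w_0=\bomega(\x_0)$ of the differential initial state near $\zz_0^*$. This confines admissible (consistent) initial data to an $n_x$-dimensional manifold through $\zz_0^*$, which is precisely why the rank condition involves $n_x$ rather than $n_z$; I would take the connected set $V$ of Definition~\ref{defn:identifiability.partial} to be a piece of this manifold, so that identifying $\x_0$ forces $\w_0=\bomega(\x_0)$ and hence the full $\zz_0$.

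Next I would build the output map $\Phi:\x_0\mapsto(\y(t_0),\ldots,\y(t_N))$ along consistent data, with $\y(t_i)$ the output at time $t_i$ on the trajectory launched from $(\x_0,\bomega(\x_0))$, and certify that $\caps_{\y}^{\rm L}$ computes its lexicographic sensitivity. By the continuous-dependence and sensitivity theory for nonsmooth index-one DAEs \cite{2015_Stechlinski_Barton_continuousdependence,2016_Stechlinski_Barton_LD_DAEs,Stechlinski_Patrascu_Barton_DAEF}, the flow is L-smooth in $\x_0$ and its LD-derivative in the directions $[\dd\quad\I_{n_x}]$ solves the sensitivity DAE \eqref{eq:sens.nonsmooth}, which is well posed precisely because regularity makes \eqref{eq:sens.nonsmoothb} uniquely solvable for $\capw$ along the trajectory. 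Applying the LD-derivative chain rule \cite{Khan2015_automaticdiff,ref:OMS_generalizedderivatives} to the composition with $\h$ yields \eqref{eq.Lsensitivities}; since the seed matrix $[\dd\quad\I_{n_x}]$ has full row rank, relation \eqref{eq:LDderivative:vs:Lderivative} lets me strip the leading seed column (the role of the first column $\dd$, which only fixes the lexicographic branch) via the projection in \eqref{eq.Lsensitivities} and recover the L-derivative $\J_{\rm L}\Phi(\x_0^*;\dd)$, whose values at $t_0,\ldots,t_N$ stack as block rows into $\pmb{\Upsilon}_{\dd}$.

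Finally I would turn the rank condition into injectivity on $V$. As an L-smooth (indeed $PC^1$-composite) map, $\Phi$ admits an essentially active selection function $\Phi_{(\ell)}$ \cite{Scholtes} that coincides with $\Phi$ on a full-dimensional cone emanating from $\x_0^*$ in the direction $\dd$, and on that cone $\Phi$ is $C^1$ with Jacobian $\J_{\rm L}\Phi(\x_0^*;\dd)$. Choosing the neighborhood $N$ and the manifold piece $V$ so that $V\cap N$ lies in this cone, the hypothesis ${\rm rank}(\pmb{\Upsilon}_{\dd})=n_x$ states that this Jacobian has full column rank, so the classical inverse-function theorem makes $\Phi=\Phi_{(\ell)}$ injective on $V\cap N$; equal outputs then force $\tilde{\x}_0=\x_0^{\dagger}$, whence $\tilde{\w}_0=\bomega(\tilde{\x}_0)=\bomega(\x_0^{\dagger})=\w_0^{\dagger}$ and $\tilde{\zz}_0=\zz_0^{\dagger}$, giving partial observability. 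I expect the crux to be the second step: proving that lexicographic differentiation commutes with the nonsmooth index-one DAE flow so that \eqref{eq:sens.nonsmooth}--\eqref{eq.Lsensitivities} genuinely return $\J_{\rm L}\Phi(\x_0^*;\dd)$, together with verifying that the branch selected by $\dd$ is $C^1$ on a genuinely full-dimensional cone, which is what allows the smooth inverse-function theorem to close the argument.
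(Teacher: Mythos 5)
Your proposal is mathematically reasonable, but it takes a genuinely different route from the paper: the paper's entire proof is a reduction to its companion identifiability results, namely Proposition~1 and Theorem~1 of the cited work on parameter estimation, obtained by treating the differential initial condition $\x_0$ as the parameter vector (formally, setting $\f_0(\btheta)=\btheta$ and identifying $\btheta$ with $\x_0$, so $n_p=n_x$), which immediately delivers the sensitivity system \eqref{eq:sens.nonsmooth.inproof} with seed $\M=[\dd \quad \II_{n_x}]$ and the rank-implies-observability conclusion, with $\caps_{\y}^{\rm L}(t)=\capy^*(t)[\dd\quad\II_{n_x}]^{-1}$ recovered via the right inverse exactly as you describe. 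What you have written is, in effect, a self-contained reconstruction of what sits inside those cited theorems: elimination of $\w_0$ by the nonsmooth implicit function theorem under Clarke regularity (this is the content of Remark~\ref{remark.gendiffone}), well-posedness of the L-sensitivity DAE, the chain rule giving \eqref{eq.Lsensitivities}, and local injectivity of the sampled output map on a connected set $V$ through $\znot^*$ compatible with Definition~\ref{defn:identifiability.partial}. Your version buys transparency and independence from the prior work; the paper's version buys brevity and reuse of already-verified machinery.

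One caveat worth flagging in your third step: you invoke essentially active $C^1$ selection functions and a full-dimensional cone on which $\Phi$ coincides with the selection whose Jacobian is $\J_{\rm L}\Phi(\x_0^*;\dd)$. That device is available when the composite output map is $PC^1$, but the theorem's hypotheses only require the data to be L-smooth ($LS^0$), a strictly larger class for which no selection-function representation exists in general. The argument used in the L-SERC literature instead exploits the fact that for L-smooth maps the directional derivative $\ee\mapsto\Phi'(\x_0^*;\ee)$ is affine in a neighborhood of the probing direction $\dd$, with linear part governed by $\J_{\rm L}\Phi(\x_0^*;[\dd\quad\II_{n_x}])$, and injectivity on a cone about $\dd$ follows from full column rank of that linear part. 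Your proof as stated is complete only for $PC^1$-composite systems; to cover the stated generality you would need to replace the selection-function step by this local-affineness property (or simply by the citation the paper uses).
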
 

\begin{proof} 
The proof follows from combining Proposition 1 and Theorem 1 in \cite{abdelfattah2024parameter}; by choosing $\f_0(\btheta)=\btheta$, and replacing the notation $\btheta$  and $\btheta^*$ by  $\x_0$ and $\x_0^*$ (i.e., $n_p=n_x$), respectively,  \cite[Proposition 1]{abdelfattah2024parameter} yields that the nonsmooth forward sensitivity system associated with \eqref{eq:1} is given as follows (see Equation (4) in \cite{abdelfattah2024parameter}):
\small\begin{subequations}\label{eq:sens.nonsmooth.inproof}
\begin{align}
&\dot{\capx}
=\f'(\x^*,\w^*,\uu^*;(\capx,\capw,\zero)), \quad \capx(t_0)
=\M,\label{eq:sens.nonsmootha.inproof}\\
&\zero
=\g'(\x^*,\w^*,\vv^*;(\capx,\capw,\zero)),\label{eq:sens.nonsmoothb.inproof}\\
&\capy
=\h'(\x^*,\w^*,\uu^*,\vv^*;(\capx,\capw,\zero,\zero)),\label{eq:sens.nonsmoothc.inproof} 
\end{align}
\end{subequations}
\normalsize
for some directions matrix $\M \in \real^{n_x \times n_k}$ (since $n_p=n_x$) and integer $n_k \in \posint$, where the $(t)$ arguments have been omitted. Then, choosing $\M=[\dd \quad \II_{n_x}]$ (hence, $n_k=n_x+1$) and applying \cite[Theorem 1]{abdelfattah2024parameter} yields that \eqref{eq:1} is partially observable if ${\rm rank}({\pmb \Upsilon}_{\dd})=n_x$ in \eqref{LSERC_ID} holds, where
\small{\begin{equation}\label{eq.yLsens} 
\caps_{\y}^{\rm L}(t)=\capy^*(t) [\dd \quad \II_{n_x}]^{-1} =\capy^*(t) \begin{bsmallmatrix}
    \zero_{1\times n_x}\\ \I_{n_x}
\end{bsmallmatrix},
\end{equation}}
\normalsize
with $\capy^*$ obtained from \eqref{eq:sens.nonsmoothc.inproof} (and thus  \eqref{eq.Lsensitivities}). $\quad\square$
\end{proof}
\normalsize
The L-SERC test depends on the values of the L-sensitivity output function at different time samples, $\{t_0,t_1,\ldots,t_N\}$. In particular, 
for a given direction $\dd \in \real^{n_x}$, we construct the L-SERC matrix ${\pmb \Upsilon}_{\dd}={\pmb \Upsilon}_{\dd}(\znot^*)\in \real^{(N+1)n_y \times n_x}$ by concatenating the values of the L-sensitivity output function at different time samples to get \eqref{LSERC_ID}, as in \cite{stechlinski2025identifiability,abdelfattah2024parameter}. In line with these works, we note that Theorem \ref{thm:nonsmoothSERC.observability} establishes a test for observability based on whether the system is ``L-SERC observable'' or ``L-SERC non-observable'':
\begin{definition}\label{defn:identifiability.partial.indirection}
%\cite{stechlinski2025identifiability}
The system \eqref{eq:1} is   \textbf{L-SERC observable} at $\znot^*$ in the direction $\dd \in \real^{n_x}$ if $\text{rank}({\pmb \Upsilon}_{\dd})=n_x$, and \textbf{L-SERC non-observable} if $\text{rank}({\pmb \Upsilon}_{\dd})<n_x$.
\end{definition}

% , i.e.,
% \begin{equation}\label{LSERC_ID}
% {\pmb \Upsilon}_{\dd}(\znot^*):= 
% (\caps_{\y}^{\rm L}(t_0),
% \caps_{\y}^{\rm L}(t_1),
% \dots,
% \caps_{\y}^{\rm L}(t_N)).
% \end{equation}

\begin{remark}
The L-SERC matrix provides local observability conclusions in the direction $\dd$. The significance of the direction here is that for nonsmooth systems/output functions, studying observability depends on the piecewise nature of the system.
%at the points of nonsmoothness depends on the 
%direction to which the system is perturbed. 
For example, for a nonsmooth output function $y=\textnormal{max}(x,0)$, the test will conclude observability if we use $d=1$ (since this ``probing'' leads to $y=x>0$ and thus observability), while if we use $d=-1$ we get non-observability (since this ``probing'' leads to $y=0$).
%studying observability at $x=0$ will give two different conclusions depending on whether we choose $d=1$ (hence we get $y=x>0$, and the system is locally observable) or we choose $d=-1$ (hence we get $y=0$, and the system is locally non-observable.) 
\end{remark}

\begin{remark}\label{remark.gendiffone}
Despite $\dd$ being a vector in $n_x$-dimensional Euclidean space, it still provides probing information  concerning the observability of $\zz_0=(\x_0,\w_0)$ in $\real^{n_z}=\real^{n_x+n_w}$. This is a consequence of  the differential state initial conditions $\x_0$ being considered as the parameters, rather than $(\x_0,\w_0)$ together, and the solution being regular: once $(\x_0^*,\w_0^*)$ is chosen, only $n_x$ degrees of freedom are available as $\w$ is constrained by $\g(\ps{\x,\w},\vv)=\zero$ as the DAE system being generalized differentiation index one guarantees a nonsmooth implicit function $\w=\rr(\x)$ semi-locally to the reference solution $\x^*(t)$ corresponding to $(\x_0^*,\w_0^*)$ (see \cite{2015_Stechlinski_Barton_continuousdependence,2016_Stechlinski_Barton_LD_DAEs}). The rough idea then is that if $\x_0$ can be determined from $\y$, then $\w_0$ can be determined from $\x_0$.
\end{remark}

%More formally, we introduce the following definition and theorem.

% \begin{remarks}\label{rmk:L-SERC}
%     \begin{enumerate}
%         \item A system that is L-SERC identifiable at $\znot^*$ in each of the standard directions $\dd = \pm \coord_i,i=1,\dots,n_z$ is not necessarily locally identifiable. This is the same observation we mentioned in \cite[Section IV]{stechlinski2024identifiability}, for which we introduced the concept of the ``twin-probing'' stage. We adopt this concept here as well to increase the certainty of positive L-SERC tests (i.e., judging identifiability), see Algorithm \ref{algo.LSERC}.
%         \item A system that is not L-SERC identifiable (or equivalently, L-SERC non-identifiable) is not necessarily locally non-identifiable. In other words, the L-SERC test shows there is rank deficiency, but actually the system is locally identifiable  (i.e., false negative) \cite[Section IV]{stechlinski2024identifiability}.
%         This behavior also occurs in the smooth case as pointed out in \cite{van2022extending}. This motivated us to introduce the ``singularity probing" stage in the L-SERC test in \cite[Section IV]{stechlinski2024identifiability}. We also adopt this stage in Algorithm \ref{algo.LSERC} to increase the certainty of negative L-SERC tests (i.e., judging local non-identifiability). 
%     \end{enumerate}
% \end{remarks}

\begin{remark}\label{remark:smoothDAE}
\normalsize In case we have a smooth DAE system, i.e., the RHS functions in  \eqref{eq:1} are $C^1$, and $\M=\II_{n_x}$ is chosen, \eqref{eq:sens.nonsmooth.inproof} simplifies as follows (without needing to check for smoothness/nonsmoothness): 
\small\begin{align}\label{eq:sens.smooth}
\dot{\caps}_{\x}(t)
&= \tfrac{\partial \f}{\partial \x} \caps_{\x}(t)+ \tfrac{\partial \f}{\partial \w} \caps_{\w}(t),\quad\caps_{\x}(t_0)
=\I_{n_x},\notag\\
\zero 
&= \tfrac{\partial \g}{\partial \x} \caps_{\x}(t)+ \tfrac{\partial \g}{\partial \w} \caps_{\w}(t),\\
\caps_{\y}(t) &= \tfrac{\partial \h}{\partial \x} \caps_{\x}(t)+ \tfrac{\partial \h}{\partial \w} \caps_{\w}(t)\notag,
\end{align}
\normalsize
 which is uniquely solved on $[t_0,t_f]$ by the classical  sensitivities \ps{$\caps_{\x}=\tfrac{\partial \x}{\partial \xnot}, \caps_{\w}=\tfrac{\partial \w}{\partial \xnot}, \caps_{\y}=\tfrac{\partial \y}{\partial \xnot}$}.
% \begin{equation}\label{eq.smoothsensitivityfunctions}
% \caps_{\x}=\tfrac{\partial \x}{\partial \xnot},\quad \caps_{\w}(t)=\tfrac{\partial \w}{\partial \xnot}, \quad \caps_{\y}(t)=\tfrac{\partial \y}{\partial \xnot},
% \end{equation}
%where we omitted the arguments  $(\x^*(t),\uu^*(t))$. 
The SERC observability test for smooth DAEs gives a yes/no answer based on whether ${\rm rank}({\pmb \Upsilon})=n_x$ or ${\rm rank}({\pmb \Upsilon})<n_x$, respectively, where 
%, ${\pmb \Upsilon}(\znot^*)\in \real^{(N+1)n_y \times n_x}$, as
\small{\begin{align}\label{eq:SERC_Matrix}
   {\pmb \Upsilon}:=
    (\caps_{\y}(t_0),
    \caps_{\y}(t_1),
    \dots,
    \caps_{\y}(t_N)).
\end{align}}
%If the rank of the SERC matrix in equation \eqref{eq:SERC_Matrix} is $\text{rank}({\pmb \Upsilon})=n_x$, then the smooth DAE system is SERC observable; otherwise, the system is SERC non-observable.
\end{remark}

\subsection{Determining Observable and Non-Observable States}
The observability L-SERC test presented above is used not only to determine the observability of the smooth/nonsmooth DAE system, but also to determine which of the system's states are non-observable. That is, it determines the states that are not possible to be identified from the system's output, hence causing the non-observability of the system.
%(i.e., those for which it is impossible to determine the states' values from the system's output). 
To make this more precise, we introduce the following definition.

\begin{definition}\label{defn:param.identifiability.partial} 
The  differential states and algebraic states in 
% $D_{\rm lno}\subseteq\{\x_1,\ldots,\x_{n_x},\w_1,\ldots,\w_{n_w}\}$
$\bm{\chi}_{\rm lno}\subseteq\{\x_1,\ldots,\x_{n_x}\}$ and $\bm{\alpha}_{\rm lno}\subseteq\{\w_1,\ldots,\w_{n_w}\}$
are \textbf{locally partially non-observable} (or simply \textbf{partially non-observable}) at $\znot^*\in D_z$ if there exists a neighborhood $N \subseteq D_z$ of $\znot^*$ and a connected set $V \subseteq D_z$ containing $\znot^*$ such that the states in $\bm{\chi}_{\rm lno} \cup \bm{\alpha}_{\rm lno}$ are locally non-observable in $N\cap V$. 
\end{definition}

As a part of our L-SERC observability algorithm, we determine which states are locally partially non-observable near $\znot^*$ by analyzing the singular vectors produced from the singular value decomposition (SVD) of the L-SERC output matrix.
% First, let the set $\{\sss_1,\ldots,\sss_{n_z}\}$ be the set of all states (differential and algebraic) of the system, i.e., $\{\sss_1,\ldots,\sss_{n_z}\}=\{\x_1,\ldots,\x_{n_x},\w_1,\ldots,\w_{n_w}\}$.
We first determine which differential states are non-observable: given the SVD  $\textstyle {\pmb \Upsilon}_{\dd}=\capu \Sigma \capv^{\rm T}$, 
with  left singular vectors matrix  $\capu\in\real^{(N+1)n_y\times (N+1)n_y}$,  singular values matrix $\Sigma\in\real^{(N+1)n_y \times n_x}$, and  right singular vectors matrix $\capv \in\real^{n_x \times n_x}$, if  ${\rm rank}({\pmb \Upsilon}_{\dd})=n_r<n_x$, then the right nullspace ${\rm Null}({\pmb \Upsilon}_{\dd}) \neq \emptyset$. Similar to the procedure outlined in \cite[Section III-B]{abdelfattah2024parameter}, we can identify the zero singular values $\sigma_{n_r+1}=\cdots=\sigma_{n_x}=0$ and it follows that the $n_x-n_r$ columns in $\capv$ associated with these zero singular values 
form a basis for $\text{Null}({\pmb \Upsilon}_{\dd})$. Accordingly, we construct a matrix whose columns are those $n_x-n_r$ singular vectors, 
\small\begin{align}\label{eq:Pmatrix}
    \capv_r:=\begin{bmatrix}
    \vv_{(n_r+1)}&\ldots&\vv_{(n_x)}
\end{bmatrix},
\end{align}
\normalsize from which we can  {identify} the non-observable differential states by placing $\capv_r$ in row echelon form, $\textnormal{rref}(\capv_r^{\rm{T}})$, and identifying pivot columns; if row $j$ of $\textnormal{rref}(\capv_r^{\rm{T}})$ contains the pivot from pivot column $i$, then $\x_i$ is  partially non-observable.

With non-observable differential states $\bm{\chi}_{\rm lno}$ identified, we can shift to  {identifying} non-observable algebraic states $\bm{\alpha}_{\rm lno}$ (thanks to generalized differentiation index one --- see Remark \ref{remark.gendiffone}): 
%For a regular solution $\zz=(\x,\w)$ of an index-1 DAE, there is a unique mapping between $\x$ and $\w$. Hence, after we obtain the set $\bm{\chi}_{\rm lno} \subseteq \{\w_1,\ldots,\w_{n_w}\}$ of locally partially non-observable differential states, we can determine the set of locally partially non-observable algebraic states $\bm{\alpha}_{\rm lno}$. Computationally, 
For each $\w_i$, we construct the matrix 
\small\begin{equation}\label{eq.Psi}
\bm{\Psi}_{\w_i}:=(\caps_{\w_i}^{\rm L} (t_0),\dots,\caps_{\w_i}^{\rm L}(t_N)), 
\end{equation}
\normalsize
where $\caps_{\w_i}^{\rm L}(t_k)=[(\caps_{\w}^{\rm L}(t_k))]_{i,\mathcal{J}}$ (i.e., $i^{\rm th}$ row with columns indexed by $\mathcal{J}$) and $\mathcal{J}=\{j:\x_j \in \bm{\chi}_{\rm lno}\}$. Then $\bm{\Psi}_{\w_i}$  characterizes to the sensitivity of the algebraic $\w_i$ with respect to the non-observable differential states and so if $\text{rank}(\bm{\Psi}_{\w_i})\neq 0$, then $\w_i\in \bm{\alpha}_{\rm lno}$ is non-observable.
%the differential states in $\bm{\chi}_{\rm lno}$ ($\caps_{\w_i}(t_j):=\frac{\partial \w_i}{\partial \x_j}:\x_j\in\bm{\chi}_{\rm lno}$) at the time sample $t_j$. If $\text{rank}(\bm{\Psi}_{\w_i})\neq 0$, then the algebraic state $\w_i$ is locally partially non-observable, i.e., $\w_i\in \bm{\alpha}_{\rm lno}$. 

With the ideas above in mind, we present the DAE L-SERC observability test in Algorithm \ref{algo.LSERC}, which requires the reference initial conditions, the reference control inputs, and a set of probing directions ($D=\{\dd_i\}$).
%This algorithm is the observability-test version of Algorithm 1 in \cite{abdelfattah2024parameter}, and the inputs for the algorithm are the reference initial conditions, $\znot^*$, the reference controls $\uu^*,\vv^*$, and a set of probing directions, $D=\{\dd_i\}$. 
The algorithm returns the set of locally partially non-observable differential states, $\bm{\chi}_{\rm lno}$, the set of locally partially non-observable differential states, $\bm{\chi}_{\rm lo}=\{\x_1,\ldots,\x_{n_x}\}\setminus \bm{\chi}_{\rm lno}$, the set of locally partially non-observable algebraic states $\bm{\alpha}_{\rm lno}$,  and the set of locally partially observable algebraic states $\bm{\alpha}_{\rm lo}=\{\w_1,\ldots,\w_{n_w}\}\setminus \bm{\alpha}_{\rm lno}$.
% Similar to Algorithm 1 in \cite{abdelfattah2024new} and Algorithm 1 in \cite{stechlinski2025identifiability},  there are three stages to account for the details mentioned in \ref{rmk:L-SERC}. We start with main stage first, which is the first stage labeled by (S1) in the Algorithm. This is the primary probing stage by which it is most likely to determine the local observability of the system. This is done through the perturbation along the primary directions (usually chosen to be the standard basis) and getting the rank of the resulting L-SERC matrix. As discussed earlier, we decrease the possibility of having false positive L-SERC tests by implementing the second stage, which is called the ``twin probing" stage and labeled by (S2). The scalar input $\epsilon_{\rm twin}$ is used in this stage. Finally, we account for the possibility of having false negative L-SERC tests by carrying the third stage, the singularity probing stage (S3), which utilizes the scalar inputs $\epsilon_{\rm sing}$ and $q$ in the Algorithm.
Implementation of Algorithm \ref{algo.LSERC} can be found in \cite{githubdae_obs}.

\begin{algorithm}
\caption{L-SERC Observability Algorithm.} 
\label{algo.LSERC}
 \begin{algorithmic}[1] 
\renewcommand{\algorithmicrequire}{\textbf{Input:}}
\Require $\zz_0^*,\uu^*,\vv^*,D=\{\dd_i\}, \{t_0,t_1,\ldots,t_N\}$
% \ENSURE  $\text{rank}({\pmb \Upsilon}_{\dd_i})$ for every L-SERC test. 
%\Procedure{}{}  
%%%%%%%%%%%%%%%%%%
\State Set $\bm{\chi}_{\rm lno}\gets\emptyset,\bm{\alpha}_{\rm lno}\gets\emptyset$
% $\bar{\bm{\Sigma}}=\emptyset$,$\bar{\bm{\capu}}=\emptyset$,$\bar{\bm{\capv}}=\emptyset$}
\label{algo.jump} 
%\hline
\For{$i=1,2,\ldots,|D|$}
    \Statex \vspace{-.3cm}\hrulefill\vspace{-.1cm}
    \State Compute   ${\pmb \Upsilon}_{\dd_i}=\capu\Sigma\capv^{\rm{T}}$,  $n_r = \text{rank}(\Sigma)$  
    \If {$n_r<n_x$}
    \State $\capv_r =[\vv_{(k): \sigma_k=0}]$,  $\text{rref}(\capv_r^{\rm{T}})=[\tilde{\vv}_{1}\; \cdots \;\tilde{\vv}_{n_x}]$
    \State Set $\bm{\chi}_{\rm lno} \gets \bm{\chi}_{\rm lno} \cup \{\x_j\}:\tilde{\vv}_j$ is pivot column 
    
   \EndIf 
\EndFor
    \Statex \vspace{-.3cm}\hrulefill\vspace{-.1cm}
\For{$i=1,2,\ldots,n_w$}
    \Statex \vspace{-.3cm}\hrulefill\vspace{-.1cm}
    \State Compute $\bm{\Psi}_{\w_i}$
    \If {$\text{rank}(\bm{\Psi}_{\w_i})\neq 0$}
    \State Set $\bm{\alpha}_{\rm lno} \gets \bm{\alpha}_{\rm lno} \cup \{\w_i\}$
   \EndIf 
\EndFor
    \Statex \vspace{-.3cm}\hrulefill\vspace{-.1cm}\\
\Return ${\pmb \Upsilon}_{\dd_i}$ for all $\dd_i \in D$\\
\Return $\bm{\chi}_{\rm lno},\bm{\alpha}_{\rm lno},\bm{\chi}_{\rm lo}=\{\x_j\}\setminus \bm{\chi}_{\rm lno},\bm{\alpha}_{\rm lo}=\{\x_j\}\setminus \bm{\chi}_{\rm lno}$ 
%\EndProcedure
\end{algorithmic}
\end{algorithm} 

\section{Sensitivity-based EKF Method}
An important application of the observability property is the design of dynamic observers, with EKF and its variations being an important technique. In this section, we present a variation of EKF, the sensitivity-based EKF (S-EKF), that utilizes the sensitivity information from \eqref{eq:sens.nonsmooth.inproof} to provide accurate estimations of the system's observable, or even partially-observable,  states. Consider the DAE system given by \eqref{eq:1}, but with process noise and measurement noise added to the system, i.e., the DAE system is given by 
\small\begin{subequations}\label{eq:EKF_DAE}
\begin{align}
 \dot{\x}(t) &= \f(\x(t),\w(t),\uu(t))+\bm{\Omega}(t), \quad  \x(t_0) = \x_0,\label{eq:EKF_DAE_1a}\\
 \zero&=\g(\x(t),\w(t),\vv(t)), \label{eq:EKF_DAE_1b}\\
 \y(t) &= \h(\x(t),\w(t),\uu(t),\vv(t))+\bm{\nu}(t),\label{eq:EKF_DAE_1c} 
 \end{align}
  \end{subequations}
\normalsize where $\bm{\Omega}(t)\in\real^{n_x}$ is the process noise, i.e.,  zero mean, uncorrelated, independent continuous random
variables with covariance $\bm{Q}\in\real^{n_x\times n_x}$ and $\bm{\nu}(t)\in\real^{n_y}$ is a measurement noise, i.e., discrete uncorrelated independent random variables with zero mean and covariance $\bm{R}\in\real^{n_y\times n_y}$. The goal of the S-EKF is that, for some  model that can be predicted by a DAE system (smooth or nonsmooth), we use the ``approximate'' DAE model in \eqref{eq:1} to give accurate estimation of the states of the ``true'' DAE model in \eqref{eq:EKF_DAE}, with the process noise representing the uncertainty in the modeling and the measurement noise representing sensor limitations. 
  
  In the proposed S-EKF, the sensitivity functions are utilized in the calculation of the covariance matrix. In particular, we solve the following ``augmented'' nonsmooth forward sensitivity system (cf.\ the non-augmented version in \eqref{eq:sens.nonsmooth.inproof})  with the covariance matrix, $\bm{P}(t)$, added as an extra state:
\small\begin{subequations}\label{eq:EKF_cov_DAE}
\begin{align}
\dot{\capx}
&=\f'(\x^*,\w^*,\uu^*;(\capx,\capw,\zero)), \quad \capx(t_0)
=\M,\label{eq:EKF_cov_DAE_1a}\\
\zero
&=\g'(\x^*,\w^*,\vv^*;(\capx,\capw,\zero)),\label{eq:EKF_cov_DAE_1b}\\
\capy
&=\h'(\x^*,\w^*,\uu^*,\vv^*;(\capx,\capw,\zero,\zero)),\label{eq:EKF_cov_DAE_1c}\\
\dot{\bm{P}} &= \begin{bsmallmatrix} \capx \\ \capw \end{bsmallmatrix} \bm{P} + \bm{P} \begin{bsmallmatrix} \capx \\ \capw \end{bsmallmatrix}^{\bm{T}} +\bm{Q}, \quad \bm{P}(t_0)=\bm{P}_0\label{eq:EKF_cov_DAE_1d},
\end{align}
\end{subequations}
\normalsize where $(\x^*,\w^*)$ are the solutions of \eqref{eq:1} corresponding to $\x_0^*$, which admits the unique solution $(\capx^*,\capy^*)$ on  $[t_0,t_f]$. Here $\M$ acts as a ``probing matrix'' for sensitivity information and should be chosen to be full rank, while the choice of $\bm{P}_0$ should be made based on the confidence in the accuracy of the initial conditions $\x^*_0$ (see \cite[Chapter 5]{brown1992introduction} for more details).
 
\begin{remark}\label{rmk:smooth_S-EKF}
If the DAE system is smooth and $\M=\II_{n_x}$, \ps{\eqref{eq:EKF_cov_DAE_1a}-\eqref{eq:EKF_cov_DAE_1c} simplifies
to \eqref{eq:sens.smooth}.}
% \small\begin{equation}
% \dot{\bm{P}}(t) &= \caps_{\zz}(t)\bm{P}(t)+ \bm{P}(t)\caps_{\zz}^{\bm{T}}(t)+\bm{Q}, \; \bm{P}(t_0)=\bm{P}_0.\label{eq:smooth_EKF_cov_DAE_1d}
% \end{equation}}
% \normalsize
% as follows, where $\caps_{\zz}(t)=(\caps_{\x}(t),\caps_{\w}(t))$:
% \small\begin{subequations}\label{eq:smooth_EKF_cov_DAE}
% \begin{align}
% \dot{\caps}_{\x}(t)
% &= \tfrac{\partial \f}{\partial \x} \caps_{\x}(t)+ \tfrac{\partial \f}{\partial \w} \caps_{\w}(t),\;\caps_{\x}(t_0)
% =\I_{n_x},\label{eq:smooth_EKF_cov_DAE_1a}\\
% \zero 
% &= \tfrac{\partial \g}{\partial \x} \caps_{\x}(t)+ \tfrac{\partial \g}{\partial \w} \caps_{\w}(t),\label{eq:smooth_EKF_cov_DAE_1b}\\
% \caps_{\y}(t) &= \tfrac{\partial \h}{\partial \x} \caps_{\x}(t)+ \tfrac{\partial \h}{\partial \w} \caps_{\w}(t),\label{eq:smooth_EKF_cov_DAE_1c}\\
% \dot{\bm{P}}(t) &= \caps_{\zz}(t)\bm{P}(t)+ \bm{P}(t)\caps_{\zz}^{\bm{T}}(t)+\bm{Q}, \; \bm{P}(t_0)=\bm{P}_0.\label{eq:smooth_EKF_cov_DAE_1d}
% \end{align}
% \end{subequations}
\end{remark} 
\normalsize We split up the time horizon $[t_0,t_f]$ into $n_s$ subintervals $[t_{k-1},t_k]$,  with $t_{n_s}=t_f$ and with $t_k$ being the times of receiving measurements; we define  the filter differential states  at each $t_k$ as $\barcapD_{t_k}:=\x^*(t_k)$, and the filter algebraic states  $\barcapA_{t_k}:=\w^*(t_k)$.
% \begin{align}\label{eq:EKF_Filter_states}
% % \barcapk_{t_k}=\begin{bmatrix}
% %         \Bar{\x}_1(t_k)\\ \Bar{\x}_2(t_k)
% %     \end{bmatrix}:= \begin{bmatrix}
% %         \x(t_k) \\
% %         \w(t_k) 
% %     \end{bmatrix}.
% \barcapk_{t_k}=\begin{bmatrix}
%    \barcapD_{t_k} \\ \barcapA_{t_k} 
% \end{bmatrix}:= 
%         \x(t_k)\\ \w(t_k)
% \end{align}
For the measurement equation, we let
\small{\begin{align}\label{eq:EKF_measurementEqn}
    \bar{\y}_{t_k}
    &:=\h(\barcapD_{t_k},\barcapA_{t_k},\uu^*(t_k),\vv^*(t_k)).%\\
    %&\; =\h(\x^*(t_k),\w^*(t_k),\uu^*(t_k),\vv^*(t_k)),\notag
\end{align}}
% To express \eqref{eq:EKF_measurementEqn} in terms of the filter states in \eqref{eq:EKF_Filter_states}, we need to express $\zz(t_{k+1})$ in terms of $\zz(t_k)$. One may think of using Euler's approximation to have such expression, i.e.,
% \begin{align*}
%     \zz(t_{k+1})\approx \zz(t_{k}) + \f(\x(t_k),\w(t_k),\uu(t_k),\bm{\theta})\Delta t,
% \end{align*}
% where $\Delta t = t_{k+1}-t_k$. However, for the DAE system in \eqref{eq:EKF_DAE}, such approximation is not accurate and cannot be effectively used in an EKF design. Hence, solving for $\zz(t_{k+1})$ implicitly using stiff ODE/DAE solvers, such as the solver ``ode15s'' in Matlab, can be implemented to provide accurate results for the EKF. Hence, we get the measurement equation in terms of the filter states as follows:
% \begin{align}\label{eq:EKF_measurementEqn_filter_states}
%     \y(t_{k+1})
%     &=\Bar{\x}_2(t_k)+\caps_{\y}(t_k)\cdot\big(\bm{F}(\Bar{\x}_1(t_{k}))-\Bar{\x}_1(t_{k})\big),
% \end{align}
% where $\bm{F}(\Bar{\x}_1(t_{k}))$ is an implicit function of $\Bar{\x}_1(t_{k})$ to calculate $\Bar{\x}_1(t_{k+1})$, and $\caps_{\y}(t_k)$ is calculated at the beginning of each prediction step and is constant during the entire step. A crucial step in the EKF process is to calculate the Jacobian, 
\normalsize The L-derivative of the measurement is calculated as \small\begin{equation}\label{eq:EKF_measurementEqn.lderivative}
\bm{C}_{t_k} := \caps_{\y}^{\rm L}(t_k)=\capy^*(t_k)\M^{-1}.
\end{equation}
\normalsize Finally, we note that we perform the L-SERC test on each subinterval $[t_{k-1},t_{k}]$ in the S-EKF algorithm to determine which states are observable and which states are not. The states that are observable are updated from the measurements $\y_{m}(t_k)$ (the subscript $m$ denotes measurements) using the observer gain $\bm{L}_{t_k}$, and the states that are non-observable assume their predicted values with no update from the measurements. That is, the states that are non-observable are calculated from the approximate DAE model in \eqref{eq:1}. We note that on any subinterval $[t_{k-1},t_{k}]$, the vector $\bm{L}_{t_k}$ should have zero entries corresponding to the non-observable states. If all states are non-observable, we set $\bm{L}_{t_k}=\bm{0}$.

\begin{algorithm}
\caption{S-EKF State Estimation Algorithm.} 
\label{algo.EKF}
 \begin{algorithmic}[1] 
\renewcommand{\algorithmicrequire}{\textbf{Input:}}
\Require $\x_0,\w_0,\uu^*,\vv^*,\capx_0=\M,\capw_0,\bm{P}_0,\{t_0,t_1,\ldots,t_{n_s}\}$
\State Set $\barcapD\gets \{\x_0\}$, $\barcapA\gets \{\w_0\}$, $N \gets \lceil {\frac{n_x}{n_y}} \rceil-1$ 
%\State Set $\barcapD_{t_0}  \gets \x_0$, $\barcapA_{t_0}  \gets \w_0$
\For{$k=1,\ldots,n_s$}
    \Statex \vspace{-.3cm}\quad \;\hrulefill\vspace{-.1cm}
%%%%%%%%%%%%%%%%%%
\State   
%\hline
  \textbf{Prediction step}
  \State Calculate $(\x(t_k),\w(t_k))$ by solving \eqref{eq:1} on $[t_{k-1},t_k]$ 
   \Statex  \quad \;    with ICs $(\x(t_{k-1}),\w(t_{k-1}))$
  %\State Set $\capx(t_{k-1})\gets\M$
 % \State Calculate $\capy(t_{k})$ and $\bm{P}(t_{k})$ from equations \eqref{eq:EKF_cov_DAE} using
  %\State initial conditions $\capx(t_{k-1})$ and $\capw(t_{k-1})$
  % using \eqref{eq:EKF_DAE}, and calculate $\caps_{\zz}(t_k)$ and $\caps_{\y}(t_k)$ using   \eqref{eq:EKF_sens_DAE}
\State   
  Set $\barcapD_{t_k}\gets\x(t_k),\barcapA_{t_k}\gets\w(t_k),\Bar{\zz}_{t_k}\gets(\barcapD_{t_k},\barcapA_{t_k})$ 
          % \State Calculate $\tilde{\h}(\barcapD_{t_k},\barcapA_{t_k})$
    % \Statex \vspace{-.3cm}\hrulefill\vspace{-.1cm}
    \Statex \vspace{-.3cm}\quad \; \hrulefill\vspace{-.1cm}
     \State \textbf{Measurement step}
    \State Obtain measurement value $\y_{m}(t_{k})$ 
    \State Calculate  $\bar{\y}_{t_k} \gets \h(\barcapD_{t_k},\barcapA_{t_k},\uu^*(t_k),\vv^*(t_k))$
     \State Calculate $(\capy(t_k),\bm{P}(t_k))$ by solving  \eqref{eq:EKF_cov_DAE} on $[t_{k-1},t_k]$
  \Statex  \quad \;    with ICs $(\capx(t_{k-1}),\capw(t_{k-1}),\bm{P}(t_{k-1}))$
  \State Set $\bm{C}_{t_k}\gets\caps^{\rm L}_{\y}(t_{k})=\capy(t_k)\M^{-1}$
      %\Statex \vspace{-.3cm}\quad \; \underline{\hspace{5cm}} 
\State Set $\bm{L}_{t_k} \gets \bm{P}(t_{k})\bm{C}_{t_k}^{\bm{T}}(\bm{R}+\bm{C}_{t_k}\bm{P}(t_{k})\bm{C}_{t_k}^{\bm{T}})^{-1}$
    \State Set $\zz_0^*\gets (\x(t_{k-1}),\w(t_{k-1}))$
     \State Set $\Delta t_k \gets (t_k-t_{k-1})/N$
     \State Set $\{T_j\} \gets \{t_{k-1},t_{k-1}+\Delta t_k,\ldots,t_{k-1}+N \Delta t_k\}$
    \State
   Run Algo  \ref{algo.LSERC}  
      with $\zz_0^*,\uu^*,\vv^*$, 
    % \State $D=\{\pm\coord_i\}$, $\epsilon_{\rm twin}= \epsilon_{\rm sing}=0.01,q=1,t_0=t_{k-1},t_f=t_{k}$ 
 $\{\pm\coord_i\}$, $\{T_j\}$  
    % \State $\bm{C}(t_k)=\caps_{\y}(t_{k-1})+\caps_{\y}(t_{k-1})(\caps_{\zz}(t_k)-\caps_{\zz}(t_{k-1}))$
\For{$\x_i \in \bm{\chi}_{\rm lno},\w_j \in \bm{\alpha}_{\rm lno}$}
\State Set $[\bm{L}_{t_k}]_{i,j} \gets 0$
\EndFor 
\State Set estimate $\Bar{\zz}_{t_k} \gets \Bar{\zz}_{t_k} + \bm{L}_{t_k} (\y_{m}(t_{k})-\bar{\y}_{t_k})$
\State Set $\barcapD \gets \barcapD \cup \{\barcapD_{t_k}\}$, $\barcapA \gets \barcapA \cup \{\barcapA_{t_k}\}$ 
    \Statex \vspace{-.3cm}\quad \;\;\hrulefill\vspace{-.1cm}
  \State Set new ICs $\x(t_{k})\gets\barcapD_{t_k}$
    \State Calculate consistent $\w(t_k)$ by solving \eqref{eq:1b}  at $t=t_k$ 
  \State Set new ICs $\capx(t_k) \gets \M$
  \State Set new ICs $\bm{P}(t_{k}) \gets (\I - \bm{L}_{t_k}\bm{C}_{t_k})\bm{P}(t_{k})$
  %\State Set $\barcapA_{t_k}\gets\w(t_k)$
  \State Calculate consistent  $\capw(t_{k})$ by solving \eqref{eq:EKF_cov_DAE_1b} at $t=t_k$
\EndFor
\State\Return $\barcapD,\barcapA$\label{algo.param.red.return} 
\end{algorithmic}
\end{algorithm}

\section{Applications \hesham{and Comparison with Literature}}
\ps{In this section, we present two examples to demonstrate the effectiveness of our proposed L-SERC test and S-EKF algorithm. 
%In the first example, we consider an index-1 DAE wind turbine power system under a constant wind speed (i.e., with constant active power and time-changing reactive power) with a smooth output function. In this example, we compare the  L-SERC test with the Lie derivatives based observability test for nonlinear ODEs illustrated in \cite{hermann1977nonlinear}. We also compare the L-SERC test with the DAE observability test introduced in \cite{terrell1997observability}.
%In the second example, we consider the same wind turbine power system but with a nonsmooth output function. It can be seen that both the Lie derivatives based observability test and the DAE observability test in \cite{terrell1997observability} are not applicable in this case, because the derivatives of the nonsmooth output function are not defined everywhere. However, our proposed L-SERC test, as well as the S-EKF algorithm, are able to overcome this limitation. 
The MATLAB code for the examples and the tests can be found in \cite{githubdae_obs}.} 
% \hesham{In this section, we present two examples to demonstrate the effectiveness of our proposed L-SERC test and S-EKF algorithm. In the first example, we consider an index-1 DAE wind turbine power system under a constant wind speed (i.e., with constant active power and time-changing reactive power) with a smooth output function. In this example, we compare the  L-SERC test with the Lie derivatives based observability test for nonlinear ODEs illustrated in \cite{hermann1977nonlinear}. We also compare the L-SERC test with the DAE observability test introduced in \cite{terrell1997observability}.
% In the second example, we consider the same wind turbine power system but with a nonsmooth output function. It can be seen that both the Lie derivatives based observability test and the DAE observability test in \cite{terrell1997observability} are not applicable in this case, because the derivatives of the nonsmooth output function are not defined everywhere. However, our proposed L-SERC test, as well as the S-EKF algorithm, are able to overcome this limitation. The MATLAB code for the examples and the tests can be found in \cite{githubdae_obs}.} 
% We note that (as described in details below) that the Lie derivatives based observability test is defined for ODE systems, hence requiring the transformation of index-1 DAE system to an ODE system
\begin{example}(\hesham{Smooth} Wind Turbine Power System) 
    %Now we consider an engineering application. 
    %That is, the identifiability of 
    \ps{Consider a wind turbine power system with constant wind speed (i.e., with constant active power and time-changing reactive power), represented by  the following DAEs \cite[Subsection 3.2]{tsourakis2009effect}}: 
    \small\begin{align}\label{ex:WTPS DAE}
        &\dot{V}_{ref} = K_{Q_i}(Q_{cmd}-Q)+\Omega_{\hesham{1}}(t)\hesham{=f_1+\Omega_1(t),} \notag\\
        &\dot{E}''_q = K_{V_i}(V_{ref}-V)\hesham{+\Omega_2 (t)=f_2+\Omega_2 (t)},\\
        &0=V^4-[2(PR+QX)+E^2]V^2+(R^2+X^2)(P^2+Q^2)\hesham{=g}, \notag
    \end{align}
\normalsize where $\x=(V_{ref},E''_q)$ are the differential states, $w=V$ is the algebraic state, $\btheta=(K_{Q_i},K_{V_i},R,X,E)$ are the system parameters, and \hesham{$\bm{\Omega}(t)=(\Omega_1(t),\Omega_2(t))$ is the process noise}. Here $V_{ref}$ represents the reference terminal voltage, $E''_q$ represents the equivalent voltage controlling the reactive current injection, and $V$ is the terminal voltage connecting the wind turbine and the grid. The system parameters $K_{Q_i}=0.1$, $K_{V_i}=40$ are integral control gains, $R=0.02$ is the net resistance, $X=0.02987$ is the net reactance, and $E=1.0164$ is the infinite bus voltage. We also note that $X_{eq}=0.8$ is the equivalent Norton reactance, $Q_{cmd}=0.6484$ is the constant reactive power command, $P=1$ is the rated power, and $Q=V(E''_q-V)/X_{eq}$ is the injected reactive power.

\hesham{We consider the time interval $[t_0,t_f]=[0,1]$ and the initial conditions $\znot=(V_{ref,0},E''_{q,0},V_0) = (0.5,0.75,1.021)$ and $\bm{P}_0=4\I$. The output function is the smooth function $y=E''_qV+\nu(t)$, where we assume a Gaussian white measurement noise $\nu(t)$. This output shows the effectiveness of the S-EKF in decoupling the states in the output, providing accurate estimations for each state individually. The left panel in Figure \ref{fig:EKF_combined} shows the outcome of the S-EKF Algorithm \ref{algo.EKF}, which produces an estimation (black curve) that accurately tracks the true solution (blue curve) for the differential states.
% The output is not as strongly sensitive to the $V_{ref}$ state as it is to $E''_q$, hence the accuracy of tracking the true solution of  $V_{ref}$ (see the top left panel of Figure \ref{fig:EKF_combined}) is less than $E''_q$.
}

\hesham{Observability tests are performed on the system in \eqref{ex:WTPS DAE}, as well as the output, with no noise, i.e., $\Omega(t)=\nu(t)=0$. For the observability L-SERC test (Algorithm \ref{algo.LSERC}),   we consider ten time samples, i.e., $N=10$ uniformly between $t_0=0$ and $t_f=1$, and we construct the matrix $\Upsilon$ in \eqref{eq:SERC_Matrix}. We see that $\text{rank}(\Upsilon)=2=n_x$, i.e., the system in \eqref{ex:WTPS DAE} is locally observable at $\znot$. The local observability of a nonlinear ODE system can be determined by the rank of the observability matrix that is constructed from the gradients of successive Lie derivatives of the output function \cite{hermann1977nonlinear}.}
%In particular, given the algebraic state   $w=V$ and the differential states  $\x=(V_{ref},E''_{q})$, let  $\f(\x,w)= (f_1(\x),f_2(\x,w))$ and $g(\x,w)= V^4-[2(PR+QX)+E^{''2}_{q}]V^2+(R^2+X^2)(P^2+Q^2)$.}
\hesham{The regularity of the solution of the index-1 DAE system in \eqref{ex:WTPS DAE} guarantees the existence of a semi-local map $V=V(\x)$ in a neighborhood around the solution trajectory of interest  \cite{2015_Stechlinski_Barton_continuousdependence}  
(the derivation of the formula for $V(\x)$ can be seen in \cite{eisa2019modeling}), so the system in \eqref{ex:WTPS DAE} can be transformed to a nonlinear ODE with two differential states in $\x$, as mentioned in Remark \ref{remark.gendiffone}, and in \cite[Subsection 2.2]{montanari2024identifiability}. The gradient of $y(\x)=E''_{q}V(\x)$ is 
\small\begin{align*}
    \nabla y(\x)=\left[ \frac{\partial y}{\partial V_{ref}}(\x), \; \frac{\partial y}{\partial E''_{q}}(\x) \right]^{\rm T}=\left[ 0 \quad V(\x)+E''_{q}\frac{\partial V}{\partial E''_{eq}}(\x) \right]^{\rm{T}}\end{align*}}
% \normalsize \hesham{where $\frac{\partial V}{\partial E''_{q}} = - \frac{\partial g / \partial E''_{q}}{\partial g / \partial V}
% =  \frac{2E''_{q}V^2}{4V^3 - 2V[2(PR+QX)+x_2^2]}$.} 
\normalsize \hesham{where} 
\hesham{\small{\begin{align*}    
\frac{\partial V}{\partial E''_{q}} &= - \frac{\partial g / \partial E''_{q}}{\partial g / \partial V}
=  \frac{2E''_{q}V^2}{4V^3 - 2V[2(PR+QX)+x_2^2]}.
\end{align*}}}
\normalsize
 \hesham{
 The Lie derivative  $L_{\f} y(\x) = \nabla y(\x) \cdot \f(\x)$  along  $\f=(f_1,f_2)$ is $L_{\f} y(\x)= \left(V+E''_{q}\frac{\partial V}{\partial E''_{eq}}\right) K_{V_i}(V_{ref} - V)=(V+\kappa)f_2.$
%  \small\begin{align*}
% L_{\f} y(\x)&= \left(V+E''_{q}\frac{\partial V}{\partial E''_{eq}}\right) K_{V_i}(V_{ref} - V)=(V+\kappa)f_2.
% \end{align*}
}
\normalsize 
\hesham{The observability matrix $\mathbf{O}(\x)$ for this 2-state system is 
%constructed from the gradient vectors:
\small\begin{align*}
\mathbf{O}(\x) &= 
\frac{\partial}{\partial \x}\begin{bmatrix}
    y(\x) \\
    L_{\f} y(\x)
\end{bmatrix}
=
\begin{bmatrix}
    0 & V(\x)+\kappa(\x) \\
    K_{V_i}V(\x)+\kappa(\x) & \frac{\partial L_{\f} y}{\partial E''_{q}}(\x)
\end{bmatrix}.
\end{align*}
\normalsize 
%We see that the observability matrix at the initial conditions $\xnot$ is full-rank, i.e., $\text{rank }\mathbf{O}(\xnot) = 2$. 
Noting that $\text{rank}(\mathbf{O}(\xnot))= 2$ (i.e., full rank), the system in \eqref{ex:WTPS DAE} is locally observable at $\xnot$, matching the results of our L-SERC test. Now we perform the local observability test proposed in \cite{terrell1997observability} for the DAE system in \eqref{ex:WTPS DAE}. We note that this test can be directly applied to a DAE system. The observability matrix $\mathbf{O}(\zz)=[\frac{\partial \f }{\partial \zz},\frac{\partial y}{\partial \zz}]^{\rm T}$ for the system in \eqref{ex:WTPS DAE} is}
\small\hesham{\begin{align*}
\mathbf{O}(\zz) &=\begin{bmatrix}
    0& -K_{Q_i} \frac{V}{X_{eq}}&-K_{Q_i} \frac{E''_q - 2V}{X_{eq}} \\
     K_{V_i}&0&- K_{V_i}\\
     0&- \left[ 2X \frac{V}{X_{eq}} + 2E''_q \right]V^2&\frac{\partial g}{\partial V}\\
    0&V&E''_{q}
\end{bmatrix},
\end{align*}
\normalsize where $\partial g/\partial V=4V^3 - 2V[2(PR+QX)+E''^2_q] - 2XV^2\left({E''_q - 2V}\right)/X_{eq}.$ 
% \normalsize where
% \small\begin{align*}
%     &\frac{\partial g}{\partial E''_{q}}=- \left[ 2X \frac{V}{X_{eq}} + 2E''_q \right]V^2,\\
%     &\frac{\partial g}{\partial V}=4V^3 - 2V[2(PR+QX)+E''^2_q] - 2XV^2\left(\frac{E''_q - 2V}{X_{eq}}\right).
%     \end{align*}
    \normalsize Note that $\text{rank}(\mathbf{O}(\znot))=3$ (i.e., full rank), the system is locally observable at $\znot$, also matching the results of L-SERC.}
\end{example}    
\normalsize
\hesham{\begin{example}(Nonsmooth Wind Turbine Power System) 
    Now we consider the system in \eqref{ex:WTPS DAE} with the same initial conditions, but with the nonsmooth output function $y=\textnormal{min}(V+\nu(t),0.98)$. This nonsmooth output represents the case when the sensor can only measure voltage values below a certain threshold (taken to be $0.98$ here). 
    The right panel in Figure \ref{fig:EKF_combined} shows the outcome of Algorithm \ref{algo.EKF} applied on $[t_0,t_f]=[0,1]$, with the same initial conditions as the first case: because the voltage starts greater than $0.98$, the system is non-observable for all states and the S-EKF algorithm estimates the states using their predicted values.
    %We notice in Figure \ref{fig:EKF_output2} that the voltage starts greater than $0.98$, 
    %hence the output function becomes $y=0.98$, i.e., a constant, 
    %and the system is non-observable for all states. 
    %Hence, we see that the estimated values of the states are taken to be their predicted values. 
    After $t=0.057$, the value of the voltage becomes less than $0.98$, hence the output function switches to $y=V$, 
    %i.e., not a constant anymore, 
    and all the states become observable. This is reflected by the state estimates tracking their true values. The output is not strongly correlated with the first state, $V_{ref}$, 
    %i.e., the singular value in the SVD of the L-SERC matrix corresponding to $V_{ref}$ is small. 
    and so the tracking accuracy of $V_{ref}$ is not high. The nonobservability behavior is indeed reflected in the outcome of the L-SERC test: we see that all the rows of the L-SERC matrix $\Upsilon_{\coord_1}$ in \eqref{LSERC_ID} are the zero vectors for $t\in[0,0.057)$, i.e., $\caps_{\y}^{\rm L}(t)=\zero,\; t\in[0,0.057)$. Starting from $t=0.057$, the rows of $\Upsilon_{\coord_1}$ have nonzero entries and are linearly independent, implying local observability of the system at $\znot$ for $t\in[0.057,1]$. The advantage of our proposed L-SERC algorithm and S-EKF algorithms is made clear in this case: it is not possible in this example (with the nonsmooth output function) to perform the other smooth observability tests. 
\end{example}}

\begin{figure}[h!]
  \centering
  \begin{minipage}[t]{0.48\columnwidth}
    \centering
    \includegraphics[width=\linewidth]{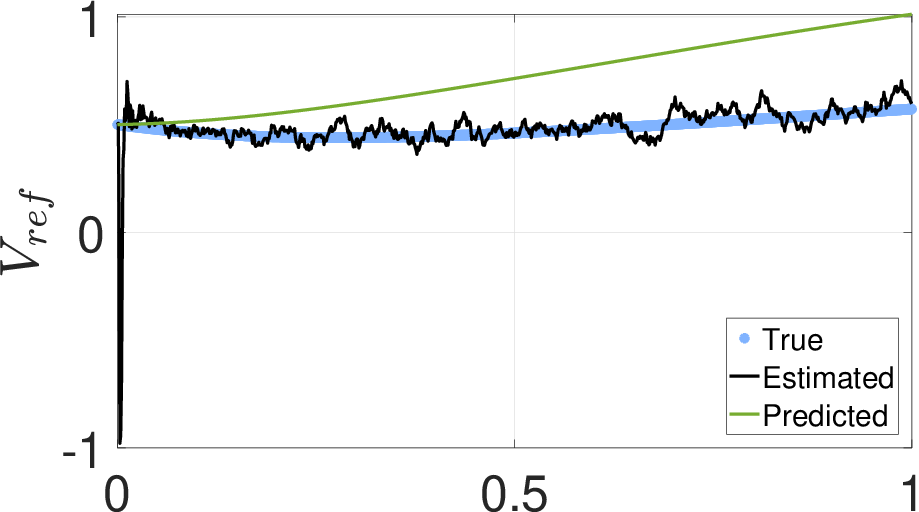}\\[1mm]
    \includegraphics[width=\linewidth]{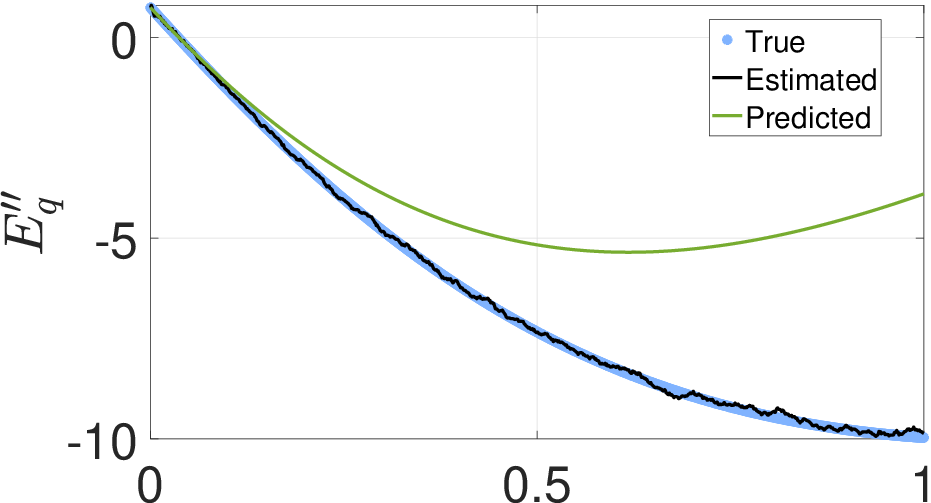}\\[1mm]
    \includegraphics[width=\linewidth]{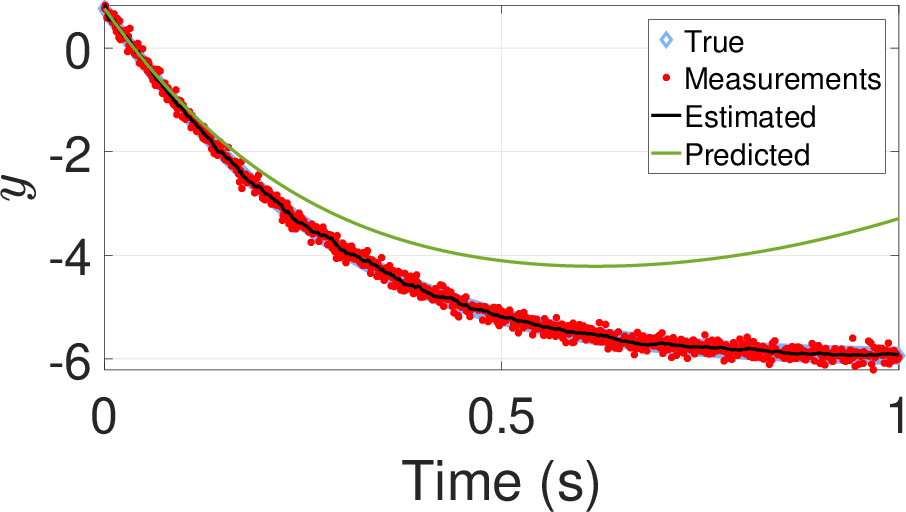}
    \par\smallskip\centering\footnotesize  \hesham{$y=E''_{q}V+\nu(t)$}
  \end{minipage}\hfill
  \begin{minipage}[t]{0.48\columnwidth}
    \centering
    \includegraphics[width=\linewidth]{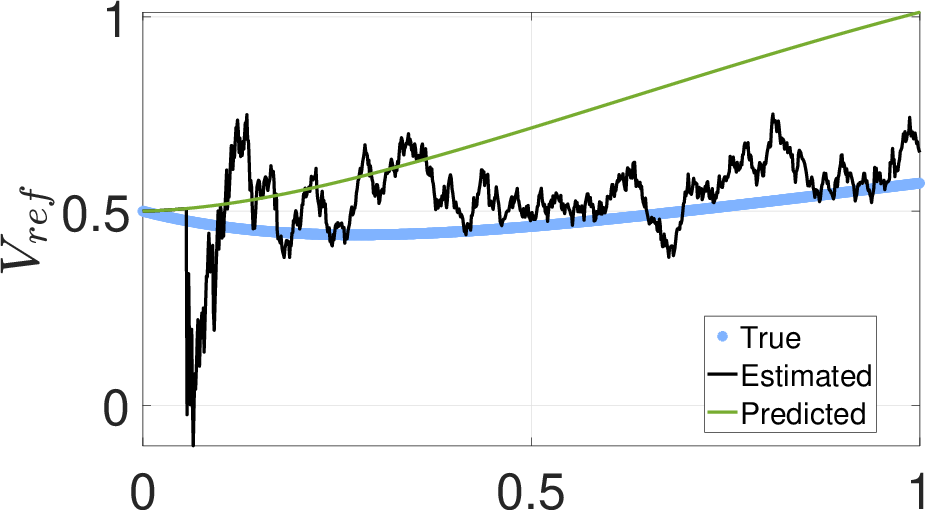}\\[1mm]
    \includegraphics[width=\linewidth]{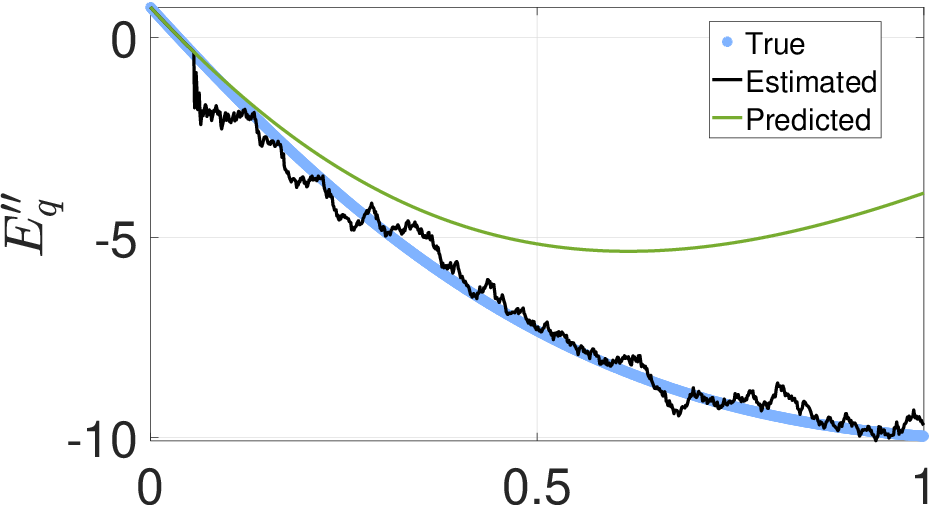}\\[1mm]
    \includegraphics[width=\linewidth]{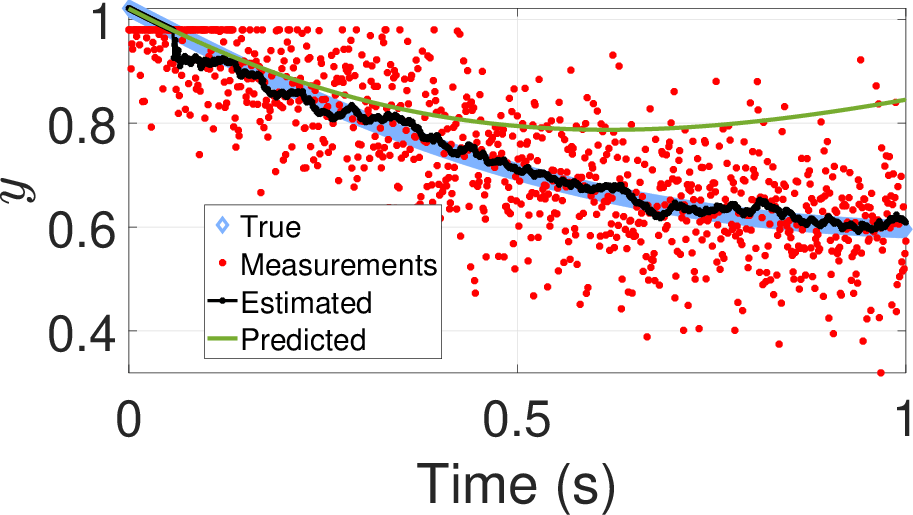}
    \par\smallskip\centering\footnotesize  \hesham{$y=\min(V+\nu(t),0.98)$}
  \end{minipage}
  \caption{\hesham{The left panel shows S-EKF with output $y=E''_{q}V+\nu(t)$ and the right panel shows S-EKF with output  $y=\textnormal{min}(V+\nu(t),0.98)$. The predicted (green) curve is the solution of  \eqref{ex:WTPS DAE} with $\Omega=0,\nu=0$, the true (light blue) curve is the solution of   \eqref{ex:WTPS DAE}, and the estimated (black) curve represents the filter states from the S-EKF algorithm.}}
  \label{fig:EKF_combined}
\end{figure}
\section{Conclusion}
%%%%%%%%%%%%%%%%%%%%%%%%%%%%%%%%%%%%%%%%%%%%%%%%%%%%
We provided a novel sensitivity-based method for assessing local observability of smooth and nonsmooth DAE systems, which can also judge observable/non-observable states. 
%The proposed method can also determine which states are locally observable and which states are not. 
%, making the introduced test useful in the context of partial observability. 
In addition, we also introduced a novel sensitivity-based EKF algorithm (S-EKF), which uses the L-SERC observability test and sensitivities of DAE systems (smooth or nonsmooth) to provide accurate estimations of the system's states from the system's output. We demonstrated the new methods in a wind turbine power system application. Lastly, we note that the methods here are amenable to smooth/nonsmooth DAE systems that are non-autonomous and/or have explicit parametric dependence, the latter of which can be dealt with by amending the states with additional variables $x_{\theta_1},\ldots,x_{\theta_{n_p}}$ and augmenting the system with $n_p$ additional ODEs  $\dot{x}_{\theta_i}=0$.
%provided an application, the wind turbine power system, to demonstrate the effectiveness of the L-SERC observability and S-EKF algorithms.

\bibliography{references} 
\bibliographystyle{IEEEtran}

\end{document}